\documentclass[11pt]{amsart}

\usepackage{amsfonts,amsmath,amssymb,amsthm}
\allowdisplaybreaks[4]

\usepackage[colorlinks,linkcolor=red,citecolor=green]{hyperref}
\usepackage{makecell}
\usepackage{braket}
\usepackage{graphics}
\usepackage{xcolor}
\usepackage{tikz}
\newcommand{\circled}[1]{
  \tikz[baseline=(char.base)]{
    \node[shape=circle,draw,inner sep=1pt] (char) {#1};
  }
}

\newcommand{\dd}{\mathrm{d}}

\newtheorem{theorem}{Theorem}
\newtheorem{lemma}[theorem]{Lemma}

\newtheorem{proposition}[theorem]{Proposition}

\numberwithin{theorem}{section}
\numberwithin{equation}{section}

\begin{document}

\setlength{\baselineskip}{1.1\baselineskip}

\title[Power convexity of the torsion function in hyperbolic space]{Power convexity of the torsion function on horo-convex domains in hyperbolic space}

\author{Wei Zhang}
\address{School of Mathematics and Statistics\\
Lanzhou University\\
Lanzhou, 730000, Gansu Province, China.}
\email{zhangw@lzu.edu.cn}

\author{Qi Zhou}
\address{School of Mathematics and Statistics\\
Lanzhou University\\
Lanzhou, 730000, Gansu Province, China.}
\email{zhouqi2025@lzu.edu.cn, zhouqimath20@lzu.edu.cn}

\date{\today}

\maketitle

\begin{abstract}
In this paper, we study the torsion problem on bounded, smooth, strictly horo-convex domains in hyperbolic space. We prove that if the diameter of the domain is sufficiently small, then $-\sqrt{u}$ is strictly convex, where $u$ denotes the torsion function. The main ingredients of our proof are the constant rank theorem, a boundary convexity estimate, and a deformation argument for the domain. We also show that the exponent $1/2$ is optimal, even among strictly horo-convex domains of arbitrarily small diameter.
\end{abstract}

2020 Mathematics Subject Classification. Primary 35B50; Secondary 58J32, 52A55.

Keywords and phrases. Power convexity, torsion function, hyperbolic space, constant rank theorem.

\section{Introduction}

Let $\Omega$ be a bounded domain in a Riemannian manifold $(M^n,g)$. The torsion problem is the Dirichlet boundary value problem
\begin{equation}\label{Equation-torsion problem}
\begin{cases}
\Delta u=-2 & \mathrm{in}\ \Omega,\\
\hspace{0.3cm}u=0 & \mathrm{on}\ \partial\Omega,
\end{cases}
\end{equation}
whose solution $u$ is called the torsion function of $\Omega$. A fundamental question is whether the convexity of the domain is inherited by $u$. In the Euclidean setting, the classical theorem of Makar-Limanov \cite{Makar-Limanov1971} asserts that $-\sqrt{u}$ is convex on a bounded convex planar domain.

The analogous model problem in spectral geometry is the first Dirichlet eigenvalue problem
\begin{equation}\label{Equation-eigenvalue problem}
\begin{cases}
\Delta\psi=-\lambda_1\psi & \mathrm{in}\ \Omega,\\
\hspace{0.3cm}\psi=0 & \mathrm{on}\ \partial\Omega,
\end{cases}
\end{equation}
where $\psi>0$ in $\Omega$, for which Brascamp and Lieb \cite{Brascamp-Lieb1976} proved that $\log\psi$ is concave on convex domains in $\mathbb{R}^n$. A quantitative refinement of this property played an essential role in the proof of the fundamental gap conjecture by Andrews and Clutterbuck \cite{Andrews-Clutterbuck2011}.

Several methods were subsequently developed to study these two problems. Korevaar \cite{Korevaar1983b} introduced the concavity maximum principle for quasilinear elliptic and parabolic equations by applying the classical maximum principle to a suitable concavity function. Kennington \cite{Kennington1985} further refined this method and used the resulting principle to establish power convexity results for a broad class of boundary value problems. In particular, Kennington pointed out that the concavity number $1/2$ of $u$ is sharp in \eqref{Equation-torsion problem}; see also the monograph of Kawohl \cite{Kawohl1985}. A different approach, based on convex envelopes and the comparison principle, was introduced by Alvarez, Lasry, and Lions \cite{Alvarez-LL1997}.

Another influential tool for establishing the convexity of solutions is the so-called microscopic convexity principle, also known as the constant rank theorem. It was introduced and developed in the 1980s by Caffarelli and Friedman \cite{Caffarelli-Friedman1985}, Singer, Wong, Yau, and Yau \cite{Singer-WYY1985}, and Korevaar and Lewis \cite{Korevaar-Lewis1987}. The central idea is to prove that the Hessian of a convex solution has constant rank. For the torsion and first eigenvalue problems, this method is applied to $-\sqrt{u}$ and $-\log\psi$, respectively. When strict convexity is already known near the boundary and for one member of a deformation family, the constant rank theorem and the continuity method yield strict convexity throughout the domain. This microscopic convexity principle was further developed for fully nonlinear elliptic equations through the work of Caffarelli, Guan, and Ma \cite{Caffarelli-GM2007}, Bian and Guan \cite{Bian-Guan2009}, Sz\'ekelyhidi and Weinkove \cite{Szekelyhidi-Weinkove2016}, and others.

Convexity properties on space forms have also received considerable attention. In particular, the $\log$-concavity of the first eigenfunction, namely the solution to \eqref{Equation-eigenvalue problem}, has been extensively studied. On the sphere $\mathbb{S}^n$, Lee and Wang \cite{Lee-Wang1987} proved the $\log$-concavity of the first Dirichlet eigenfunction on convex domains by adapting the continuity argument of  Singer, Wong, Yau, and Yau. The sharp fundamental gap estimate for convex domains in the sphere was later established by He, Wei, and Zhang \cite{He-WZ2020} for $n\geq 3$ and by Dai, Seto, and Wei \cite{Dai-SW2021} for $n=2$. For convex domains in surfaces of positive curvature satisfying a condition involving the curvature and its derivatives, Khan, Nguyen, Tuerkoen, and Wei \cite{Khan-NTW2025} obtained strong $\log$-concavity estimates for the first eigenfunction and corresponding lower bounds for the fundamental gap.

Concerning the torsion problem, Korevaar \cite{Korevaar1987} reported unpublished joint work with Treibergs extending the constant rank theorem to spaces of nonnegative constant curvature and indicated its applications to the torsion and first eigenvalue problems on the sphere. More recently, Grossi, Provenzano, and Raom \cite{Grossi-PRarXiv2026} gave a detailed treatment of the Lane-Emden equation on uniformly convex domains in $\mathbb{S}^2$ and established the corresponding strict power convexity properties, with the torsion problem \eqref{Equation-torsion problem} as a particular case.

The situation changes substantially in negative curvature. Shih \cite{Shih1989} constructed a convex domain in the hyperbolic plane $\mathbb{H}^2$ whose first Dirichlet eigenfunction is not $\log$-concave. Bourni, Clutterbuck, Nguyen, Stancu, Wei, and Wheeler \cite{Bourni-CNSWW2022} subsequently showed that such counterexamples exist in the $n$-dimensional hyperbolic space $\mathbb{H}^n$ for every $n\geq 2$. These results show that geodesic convexity is insufficient to recover the corresponding Euclidean conclusions. A useful stronger notion is horo-convexity. A domain $\Omega\subset\mathbb{H}^n$ is called horo-convex if at every point $p\in\partial\Omega$, there exists a horosphere through $p$ such that $\Omega$ lies on its convex side. For a smooth domain, this is equivalent to $$\mathrm{II}_{\partial\Omega}\geq g_{\partial\Omega},$$ where $\mathrm{II}_{\partial\Omega}$ denotes the second fundamental form with respect to the outward unit normal and $g_{\partial\Omega}$ denotes the induced metric. The domain is strictly horo-convex if the above inequality is strict. In this setting, Wei and Xiao \cite{Wei-Xiao2025} used a constant rank theorem to establish the $\log$-concavity of the first eigenfunction on horo-convex domains of sufficiently small diameter in $\mathbb{H}^n$. In dimension two, the diameter restriction was later removed by Dai, Ennis, Nguyen, and Wei \cite{Dai-ENW2026}, who further proved that every superlevel set of the first eigenfunction is horo-convex.

Motivated by these developments, we study the torsion problem \eqref{Equation-torsion problem} on bounded domains in $\mathbb{H}^n$. The following is the main result of this paper.

\begin{theorem}\label{Theorem-main theorem}
Let $\Omega$ be a bounded, smooth, strictly horo-convex domain in $\mathbb{H}^n$ for $n\geq 2$ with diameter $\mathrm{diam}(\Omega)\leq r_0(n)$, where $r_0(n)>0$ is a constant to be determined below. If $u$ is the solution of the torsion problem \eqref{Equation-torsion problem}, then $-\sqrt{u}$ is strictly convex in $\Omega$.
\end{theorem}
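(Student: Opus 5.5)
Set $v=-\sqrt{u}$, so that $u=v^2$. Substituting into $\Delta u=-2$ gives
\[
v\Delta v+|\nabla v|^2=-1 \quad\text{in }\Omega,
\]
with $v=0$ and $|\nabla v|>0$ on $\partial\Omega$, since $|\nabla v|^2=|\nabla u|^2/(4u)\to |\nabla u|^2/(4u)$ stays bounded away from zero by the Hopf lemma. The plan follows the abstract: a constant rank theorem for $\nabla^2 v$ on $\mathbb{H}^n$, a boundary convexity estimate, and a continuity/deformation argument. Strict convexity will mean $\nabla^2 v>0$ as a Riemannian Hessian.

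\textbf{Step 1 (constant rank).} Suppose $\nabla^2 v\ge 0$ in $\Omega$. Let $l$ be the minimal rank and take a point $x_0$ where the rank equals $l<n$. In a neighborhood, set $\phi=\sigma_{l+1}(\nabla^2 v)+q(\nabla^2 v)$ in the Bian--Guan style, and show that
\[
v\Delta\phi\le C(\phi+|\nabla\phi|)
\]
holds modulo good terms. To get this, differentiate the equation twice in the small eigenvalue directions and commute covariant derivatives using the curvature $-1$ of $\mathbb{H}^n$. The commutation produces terms of the form $-c\,(|\nabla v|^2 g-\nabla v\otimes\nabla v)$ and $v_{ii}$ times constants. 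The negative curvature contributes terms of the wrong sign, proportional to $|\nabla v|^2$ and to $v$. Since $v=-\sqrt{u}$ and $|\nabla v|$ are small when $\mathrm{diam}(\Omega)$ is small (compare $u$ with the torsion function of a geodesic ball, which for a ball of radius $r$ is about $r^2-d^2$), these terms are dominated by the positive contribution coming from the equation. This is where the constant $r_0(n)$ enters. The strong maximum principle then gives $\phi\equiv0$, so the rank is constant. I expect this to be the main obstacle: the bad curvature terms must be controlled quantitatively after rescaling, and one must track that the rescaled $v$, $\nabla v$, and the inverse of $\nabla^2 v$ restricted to the large-eigenvalue block remain bounded.

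\textbf{Step 2 (boundary).} Show that $\nabla^2 v>0$ in a neighborhood of $\partial\Omega$. At a boundary point, $\nabla v=|\nabla v|\nu$, where $\nu$ is the outward normal. The tangential Hessian equals $|\nabla v|\,\mathrm{II}$ plus corrections from $\nabla^2u/\sqrt u$, and this blows up like $u^{-1/2}$ in a favorable way. Expanding $u\approx d\cdot a+d^2 b$ in the distance $d$ to the boundary, we get $v_{\nu\nu}\sim c\,u^{-3/2}|\nabla u|^2>0$, and the tangential directions are controlled by $\mathrm{II}\ge g$, so that $\mathrm{II}-(\text{curvature error})>0$ under strict horo-convexity. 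Hence strict convexity holds in a collar.

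\textbf{Step 3 (deformation).} Deform $\Omega$ through strictly horo-convex domains $\Omega_t$ of diameter at most $r_0$, for instance by a family connecting $\Omega$ to a small geodesic ball. On a geodesic ball, $u$ is explicit and radial, and a direct check shows that $-\sqrt u$ is strictly convex for small radius. Now let $t^*$ be the first time strict convexity fails. The boundary estimate of Step 2, which is uniform along the family, together with smooth dependence on $t$, shows that $\nabla^2 v_{t^*}\ge0$ with rank less than $n$ somewhere in the interior, while the rank equals $n$ near the boundary. This contradicts Step 1, so $-\sqrt u$ is strictly convex in $\Omega$.
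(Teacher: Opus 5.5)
\textbf{The gap is in Step 1.} The constant rank theorem for $\nabla^2 v$ itself is false in $\mathbb{H}^n$, even locally and with $|v|$ as small as you like, so no choice of $r_0$ can rescue it.

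Let $\rho$ be the signed distance to a totally geodesic hyperplane, so that $\nabla^2\rho=\tanh\rho\,(g-\mathrm{d}\rho\otimes\mathrm{d}\rho)$. Let $h$ solve $h''+(n-1)\tanh\rho\,h'=-(1+h'^2)/h$ with $h(0)=-a$, $h'(0)=0$ and $0<a<1/(2n)$. Then $h$ is even with $h''(0)=1/a$. Near $\{\rho=0\}$ the function $v=h(\rho)$ has the following properties:
\begin{itemize}
\item it solves $\Delta v=-(1+|\nabla v|^2)/v$, so $u=v^2$ solves $\Delta u=-2$;
\item it satisfies $-1/(2n)<v<0$;
\item $\nabla^2 v=h''\,\mathrm{d}\rho^2+h'\tanh\rho\,(g-\mathrm{d}\rho^2)\ge 0$;
\item but $\nabla^2 v$ has rank $1$ on $\{\rho=0\}$ and rank $n$ elsewhere.
\end{itemize}
Concretely, $\phi=\sigma_2(\nabla^2 v)$ satisfies $\phi=|\nabla\phi|=0$ and $\Delta\phi=2(n-1)/a^2>0$ at $\rho=0$. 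So no inequality $\Delta\phi\le C(\phi+|\nabla\phi|)$ can be derived from the equation, a bound on $v$, and $\nabla^2v\ge 0$ alone.

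This is exactly the term your heuristic misjudges. The commutation formula $\sum_\alpha v_{ii\alpha\alpha}=(\Delta v)_{ii}-2nv_{ii}+2\Delta v$ produces, for each bad index, a term $+2\Delta v=-2(1+|\nabla v|^2)/v\ge 2/|v|$. In total it contributes $2(n-l)\sigma_l(G)\Delta v$ to $\Delta\phi$. This is not of size $|\nabla v|^2$ or $|v|$: it blows up as the domain shrinks. At points where $\nabla v=0$ there is no zeroth-order good term to absorb it. Note also that $|\nabla v|$ is scale invariant, hence not small on small domains, and it blows up at $\partial\Omega$ rather than staying bounded.

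\textbf{The missing idea} is to replace $\nabla^2 v$ by $\Lambda=\nabla^2 v-|\nabla v|g$. With $w=|\nabla v|$ and $a_\alpha=v_\alpha/w$, the term $-\Delta w$ contributes $-w^{-1}\sum_\alpha(1-a_\alpha^2)v_{\alpha\alpha}^2\sim -2\sum_{\alpha\in G}\lambda_\alpha+\cdots$. Since $\sum_{\alpha\in G}\lambda_\alpha\sim-(1+w^2+nvw)/v$, this exactly cancels the $-2(1+w^2)/v$ curvature term. The remainder can then be shown nonpositive using only $-1/(2n)<v<0$. That bound comes from comparison with the torsion function of a ball of radius $\le 1/(2\sqrt n)$, and it is where the diameter restriction really enters.

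This change propagates through your other steps:
\begin{itemize}
\item The boundary estimate for $\Lambda$ needs $\mathrm{II}>g$. In your version only $\mathrm{II}>0$ is used, so horo-convexity never enters your argument, which should have been a warning sign.
\item The ball case must be verified for $\Lambda$, i.e.\ $\varphi'-\varphi>0$ for $\varphi=v_B'$. This requires a radius restriction.
\item Step 3 needs an actual family of strictly horo-convex domains staying inside a fixed small ball. The paper obtains it from the Hu--Li--Wei locally constrained flow, which preserves containment in $B_R(o)$.
\end{itemize}
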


The proof below allows us to take $r_0(n)=1/(2\sqrt{n})$. The exponent $1/2$ in Theorem \ref{Theorem-main theorem} is also optimal. More precisely, we show in Section \ref{Section 7} that for any $n\geq 2$, $\alpha>1/2$, and $\varepsilon>0$, there exists a bounded, smooth, strictly horo-convex domain $\Omega\subset\mathbb{H}^n$ with $\mathrm{diam}(\Omega)<\varepsilon$ such that $u^\alpha$ is not concave, where $u$ denotes its torsion function.

We next outline the proof of Theorem \ref{Theorem-main theorem}. Set $v=-\sqrt{u}$ and, following Wei and Xiao \cite{Wei-Xiao2025}, define
\begin{equation}\label{Tensor}
\Lambda=\nabla^2v-|\nabla v|g,
\end{equation}
where $\nabla v$ and $\nabla^2v$ denote the gradient and Hessian of $v$ with respect to the hyperbolic metric $g$, respectively. The tensor $\Lambda$ is naturally adapted to horo-convexity, since its estimate near the boundary $\partial\Omega$ involves precisely the difference $\mathrm{II}_{\partial\Omega}-g_{\partial\Omega}$. Our aim is to prove that $\Lambda>0$ throughout $\Omega$, which immediately implies that $v$ is strictly convex. 

A direct computation shows that $v$ satisfies
\begin{equation}\label{Equation-torsion-v}
\Delta v=-\frac{1+|\nabla v|^2}{v}.
\end{equation}
The key to proving the positivity of $\Lambda$ is to establish a constant rank theorem for $\Lambda$. Its proof is more involved than that for the eigenvalue problem \eqref{Equation-eigenvalue problem} considered by Wei and Xiao \cite{Wei-Xiao2025}, owing to the explicit dependence on $v$ in equation \eqref{Equation-torsion-v}. The eigenvalue estimate used in their work is replaced here by the uniform estimate
\begin{equation}\label{Estimate-uniform estimate-v}
-\frac{1}{2n}<v<0\quad \mathrm{in}\ \Omega.
\end{equation}
To derive \eqref{Estimate-uniform estimate-v}, we first study the torsion function on a sufficiently small geodesic ball and then apply the comparison principle. On such balls, the positivity of $\Lambda$ can also be verified directly. For a general domain, we prove that $\Lambda$ is positive definite near $\partial\Omega$ and establish the constant rank theorem under the assumption \eqref{Estimate-uniform estimate-v}. Finally, we construct a family of horo-convex domains joining the given domain to a geodesic ball. Along this deformation, any interior degeneration of $\Lambda$ would force its rank to be constant throughout the domain, contradicting its positivity near the boundary. The continuity method therefore yields $\Lambda>0$ in $\Omega$.

The paper is organized as follows. In Section \ref{Section 2}, we provide some preliminaries, including elementary symmetric functions and the covariant derivative formulas in hyperbolic space. In Section \ref{Section 3}, we study the torsion function on geodesic balls and derive the uniform estimate, while Section \ref{Section 4} establishes the strict positivity of $\Lambda$ near the boundary. Section \ref{Section 5} is devoted to the constant rank theorem for $\Lambda$, and in Section \ref{Section 6} we combine these results with the curvature flow deformation to prove Theorem \ref{Theorem-main theorem}. Finally, Section \ref{Section 7} shows that the exponent $1/2$ cannot be improved.

\section{Preliminaries}\label{Section 2}

In this section, we recall the elementary symmetric functions and the covariant derivative identities in hyperbolic space which will be used in the proof of the constant rank theorem. We begin with the definition of the $k$-th elementary symmetric function. For $1\leq k\leq n$ and $\lambda=(\lambda_1,\lambda_2,\cdots,\lambda_n)\in\mathbb{R}^n$, define $$\sigma_k(\lambda)=\sum_{1\leq i_1<i_2<\cdots<i_k\leq n}\lambda_{i_1}\lambda_{i_2}\cdots\lambda_{i_k}.$$ We also adopt the convention that $\sigma_0(\lambda)=1$ and $\sigma_{-1}(\lambda)=\sigma_{n+1}(\lambda)=0$. The definition of $\sigma_k$ for vectors in $\mathbb{R}^n$ can be extended to $n\times n$ symmetric matrices in a natural way by letting $\sigma_k(W)=\sigma_k(\lambda(W))$, where $\lambda(W)=(\lambda_1(W),\lambda_2(W),\cdots,\lambda_n(W))$ are the eigenvalues of the symmetric matrix $W$. We denote by $\sigma_{k-1}(\lambda|i)$ the $(k-1)$-th elementary symmetric function obtained by setting $\lambda_i=0$, and by $\sigma_{k-2}(\lambda|ij)$ $(i\neq j)$ the $(k-2)$-th elementary symmetric function obtained by setting $\lambda_i=\lambda_j=0$. When $W$ is diagonal, we use the same notation $\sigma_{k-1}(W|i)$ and $\sigma_{k-2}(W|ij)$ for the corresponding symmetric functions of its diagonal entries.

The first and second derivatives of the elementary symmetric functions are given in the following lemma.
\begin{lemma}[Proposition 2.2 in \cite{Guan-Ma2003}]\label{Lemma-sigma_k}
Suppose that $W=(w_{\alpha\beta})$ is diagonal. For $k=1,2,\cdots,n$, we have
\begin{equation}\label{Equality-sigma_{k-1}}
\frac{\partial\sigma_k(W)}{\partial w_{\alpha\beta}}=\begin{cases}
\sigma_{k-1}(W|\alpha) & \mathrm{if}\ \alpha=\beta,\\
0 & \mathrm{if}\ \alpha\neq\beta,
\end{cases}
\end{equation}
and $$\frac{\partial^2\sigma_k(W)}{\partial w_{\alpha\beta}\partial w_{\gamma\delta}}=\begin{cases}
\sigma_{k-2}(W|\alpha\gamma) & \mathrm{if}\ \alpha=\beta,\gamma=\delta,\alpha\neq\gamma,\\
-\sigma_{k-2}(W|\alpha\gamma) & \mathrm{if}\ \alpha=\delta,\beta=\gamma,\alpha\neq\gamma,\\
0 & \mathrm{otherwise}.
\end{cases}$$
\end{lemma}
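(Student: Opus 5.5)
The plan is a direct computation with the eigenvalue formula for $\sigma_k$. Because both sides are statements about a diagonal matrix, and $\sigma_k$ is invariant under orthogonal conjugation, I would compute via the polynomial expression of $\sigma_k$ in the matrix entries:
\[
\sigma_k(W)=\frac{1}{k!}\sum \delta^{i_1\cdots i_k}_{j_1\cdots j_k}\, w_{i_1j_1}\cdots w_{i_kj_k},
\]
where $\delta^{i_1\cdots i_k}_{j_1\cdots j_k}$ is the generalized Kronecker symbol. Equivalently, $\sigma_k(W)$ is the sum of all principal $k\times k$ minors of $W$. This formula is valid for every symmetric $W$, not only diagonal ones, so it can be differentiated in the entries $w_{\alpha\beta}$ and then evaluated at a diagonal $W$. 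Throughout, I treat the entries $w_{\alpha\beta}$ as independent variables, following the usual convention of Guan and Ma.

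For the first derivative, write $\sigma_k(W)=\sum_{|I|=k}\det W_{II}$. Differentiating $\det W_{II}$ in $w_{\alpha\beta}$ gives the $(\alpha,\beta)$ cofactor of $W_{II}$ when $\alpha,\beta\in I$, and zero otherwise. At a diagonal $W$, an off-diagonal cofactor ($\alpha\neq\beta$) of a diagonal matrix vanishes. The diagonal cofactor ($\alpha=\beta$) equals $\prod_{i\in I\setminus\{\alpha\}}\lambda_i$. Summing over all $I\ni\alpha$ with $|I|=k$ gives exactly $\sigma_{k-1}(\lambda|\alpha)$. This proves \eqref{Equality-sigma_{k-1}}.

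For the second derivative, I differentiate the cofactor once more. The second derivative of $\det W_{II}$ with respect to $w_{\alpha\beta}$ and $w_{\gamma\delta}$ is nonzero only if $\alpha\neq\gamma$ and $\beta\neq\delta$. In that case it equals $\pm$ the minor of $W_{II}$ obtained by deleting rows $\alpha,\gamma$ and columns $\beta,\delta$, with sign from the permutation expansion. At a diagonal $W$, this complementary minor is nonzero only if the row pair $\{\alpha,\gamma\}$ coincides with the column pair $\{\beta,\delta\}$. That leaves two cases. If $\beta=\alpha$ and $\delta=\gamma$, the sign is $+$. If $\beta=\gamma$ and $\delta=\alpha$, the permutation is a transposition, so the sign is $-$. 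In both cases the remaining minor is $\prod_{i\in I\setminus\{\alpha,\gamma\}}\lambda_i$. Summing over $I\supset\{\alpha,\gamma\}$ gives $\pm\sigma_{k-2}(\lambda|\alpha\gamma)$. All other index configurations give $0$; this includes $\alpha=\gamma$, since then the same row would be differentiated twice and a determinant is linear in each row.

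The only delicate point is the sign bookkeeping in the transposition case, together with being explicit that the entries are treated as independent variables rather than as symmetric pairs. I would check the sign directly on the $2\times2$ determinant $w_{11}w_{22}-w_{12}w_{21}$. Its mixed derivative $\partial^2/\partial w_{12}\partial w_{21}$ equals $-1=-\sigma_0$, which fixes the sign.
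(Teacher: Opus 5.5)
Your argument is correct and is essentially the standard proof: expand $\sigma_k$ as the sum of principal $k\times k$ minors (equivalently via the generalized Kronecker symbol), treat the $w_{\alpha\beta}$ as independent variables, and evaluate first and second partials at a diagonal matrix. The paper gives no proof of its own and simply cites Guan and Ma, whose argument is this same computation; your handling of the transposition sign and of the vanishing cases ($\alpha=\gamma$ or $\beta=\delta$) is sound.
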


We next record the differential identities in hyperbolic space $\mathbb{H}^n$ of constant sectional curvature $-1$. We use the curvature convention $$R_{ijkl}=-(\delta_{ik}\delta_{jl}-\delta_{il}\delta_{jk})$$ in a local orthonormal frame. For a smooth function $u$, subscripts denote covariant derivatives. In normal coordinates at a fixed point, the Ricci commutation identities give, for $i\neq j$,
\begin{equation}\label{Formula-commutation formula-3rd}
u_{jii}=u_{iij}-u_j
\end{equation}
and $$u_{ii\alpha\alpha}=u_{\alpha\alpha ii}-2u_{ii}+2u_{\alpha\alpha}.$$
Consequently,
\begin{equation}\label{Formula-commutation formula-4th}
\sum_{\alpha=1}^nu_{ii\alpha\alpha}=(\Delta u)_{ii}-2nu_{ii}+2\Delta u.
\end{equation}

For later use, we also recall the radial formulas in geodesic polar coordinates centered at a point $o\in\mathbb{H}^n$. The metric is $$g=\dd r^2+\sinh^2r\dd z^2,$$where $\dd z^2$ denotes the standard metric on $\mathbb{S}^{n-1}$. If $u=u(r)$ is radial, the Laplacian is
\begin{equation}\label{Equation-radial Laplacian}
\Delta u=u''+(n-1)\coth r u',
\end{equation}
and its Hessian is given by
\begin{equation}\label{Equality-radial Hessian}
\nabla^2u=u''\dd r^2+u'\coth r(g-\dd r^2).
\end{equation}

\section{The geodesic ball case and a uniform estimate}\label{Section 3}

In this section, we first prove that the tensor $\Lambda$ defined in \eqref{Tensor} is positive definite on geodesic balls of sufficiently small radius and then use the radial solution as a barrier to obtain a uniform estimate for the torsion function on general domains.

Let $B_R\subset\mathbb{H}^n$ be a geodesic ball of radius $R$, and let $u_B$ denote its torsion function, which satisfies $$\begin{cases}
\Delta u_B=-2 & \mathrm{in}\ B_R\\
\hspace{0.3cm} u_B=0 & \mathrm{on}\ \partial B_R.
\end{cases}$$ By rotational invariance and uniqueness, $u_B$ depends only on the radial variable $r$. Hence, by the radial Laplacian formula \eqref{Equation-radial Laplacian}, $u_B$ satisfies
\begin{equation}\label{Equation-torsion-radial-u}
u_B''+(n-1)\coth ru_B'=-2\quad\mathrm{in}\ (0,R)
\end{equation}
with $u_B'(0)=0$ and $u_B(R)=0.$ Equivalently, $$(\sinh^{n-1}ru_B')'=-2\sinh^{n-1}r,$$ and hence $$u_B'(r)=-2(\sinh r)^{1-n}\int_0^r\sinh^{n-1}s\dd s<0$$ for $r\in(0,R)$. Set $v_B=-\sqrt{u_B}$ and denote $\varphi=v_B'$. Since $u_B'<0$, we have $\varphi=-\frac{u_B'}{2u_B^{1/2}}>0$ in $(0,R)$, so that $|\nabla v_B|=\varphi$, while equation \eqref{Equation-torsion-v} then becomes
\begin{equation}\label{Equation-torsion-radial}
\varphi'=-(n-1)\coth r\varphi-\frac{1+\varphi^2}{v_B}.
\end{equation}

We now verify the positivity of the tensor \eqref{Tensor} for $v_B$. The following proposition also provides an explicit restriction on the radius of the ball.
\begin{proposition}\label{Proposition-ball case}
Define
\begin{equation}
\rho_0(n)=\mathrm{arccoth}\left(\frac{1+\sqrt{5+4/(n-1)}}{2}\right).
\end{equation}
If $0<R<\rho_0(n)$, then we have $$\nabla^2v_B-|\nabla v_B|g>0\quad \mathrm{in}\ B_R.$$
\end{proposition}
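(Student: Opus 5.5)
The plan is to compute the two eigenvalues of $\Lambda_B:=\nabla^2v_B-|\nabla v_B|g$ explicitly in geodesic polar coordinates. Since $v_B$ is radial, \eqref{Equality-radial Hessian} gives
$$\Lambda_B=(\varphi'-\varphi)\,\dd r^2+\varphi(\coth r-1)(g-\dd r^2)\quad\text{for }0<r<R.$$
The tangential eigenvalue $\varphi(\coth r-1)$ is positive for $0<r<R$, because $\varphi>0$ there and $\coth r>1$. The whole proposition therefore reduces to two things: the one-variable inequality $\varphi'>\varphi$ on $(0,R)$, and a separate check at the center $r=0$.

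\textbf{The center.} Here $u_B'(0)=0$, and \eqref{Equation-torsion-radial-u} gives $u_B''(0)=-2/n$. Hence $v_B$ is smooth near $o$ with $\nabla v_B(o)=0$ and
$$\nabla^2v_B(o)=\frac{1}{n\sqrt{u_B(0)}}\,g>0.$$
So $\Lambda_B(o)>0$.

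\textbf{Reduction to an inequality in $u_B$.} I will translate $\varphi'>\varphi$ back to $u:=u_B$ rather than work with \eqref{Equation-torsion-radial}. Write $w=-u'>0$. From
$$v_B''=-\frac{u''}{2\sqrt u}+\frac{u'^2}{4u^{3/2}},$$
the inequality $v_B''>v_B'$ is equivalent, after multiplying by $4u^{3/2}$ and substituting $u''=-2+(n-1)\coth r\,w$ from \eqref{Equation-torsion-radial-u}, to
$$4u+w^2-2uw\bigl(1+(n-1)\coth r\bigr)>0\quad\text{on }(0,R).$$

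\textbf{Bounds on $w$ and $u$.} From the explicit formula for $u_B'$ and $1\le\cosh s\le\cosh r$, I get
$$\frac{2\tanh r}{n}\le w\le\frac{2\sinh r}{n},$$
by comparing $\int_0^r\sinh^{n-1}s\,\dd s$ with $\int_0^r\sinh^{n-1}s\cosh s\,\dd s=\sinh^n r/n$. In particular $(n-1)\coth r\,w\le 2(n-1)\cosh r/n$, which is bounded. Integrating the bounds on $w$ also gives two-sided bounds on $u(r)=\int_r^R w$, of size roughly $(R^2-r^2)/n$.

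\textbf{The main obstacle.} The hard step is to show that the quadratic expression above is positive uniformly for $r\in(0,R)$ exactly when $R<\rho_0(n)$. Two features must be handled together: the $\coth r$ factor is large near the center, and $u$ is comparatively large there while $w\to0$.

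I would first try to regard the expression as a quadratic in $w$, namely $w^2-2uAw+4u$ with $A=1+(n-1)\coth r$. It is positive once $uA^2<4$, and also whenever $w$ lies below the smaller root. Near $r=0$ the term $4u$ dominates, because $w$ is small and $u\coth r\,w$ is bounded by the estimate above. Near $r=R$ the term $w^2$ dominates, since $u\to0$ while $w$ stays away from $0$.

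The threshold $\coth\rho_0=\bigl(1+\sqrt{5+4/(n-1)}\bigr)/2$ is the positive root of $(n-1)c^2-(n-1)c-n=0$. This suggests the decisive estimate is obtained by replacing $\sinh,\tanh$ by their values at $R$ in the bounds above and reducing to the sign of this quadratic in $c=\coth R$. I would aim to show monotonicity in $r$ of a suitable quotient, for instance $\bigl(4u+w^2\bigr)/(uw)$ compared with $2A$, so that the worst case is an endpoint. I expect matching exactly the constant $\rho_0(n)$ to require the sharper comparison of $\int_0^r\sinh^{n-1}s\,\dd s$ with $\sinh^n r/(n\cosh r)$, and possibly a Riccati-type ODE argument on $\psi=w/u$ driven by \eqref{Equation-torsion-radial-u}.
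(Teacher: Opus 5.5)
\paragraph{The gap: the main inequality is not proved.} Your eigenvalue decomposition is correct. So is the check at the center: your value $\nabla^2v_B(o)=g/(n\sqrt{u_B(0)})$ agrees with the paper's $h(0)=-1/(nv_B(0))$. Your reformulation of $\varphi'>\varphi$ as
\[
4u+w^2-2uw\bigl(1+(n-1)\coth r\bigr)>0
\]
is also right. But this inequality on all of $(0,R)$, for every $R<\rho_0(n)$, is the whole content of the proposition. Your proposal only lists possible strategies for it and carries none of them out.

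The most direct strategy you suggest does not suffice. Bound the negative terms using $w\le 2\sinh r/n$ and keep only $4u$. This gives the lower bound $4u\bigl(1-\frac{(n-1)\cosh r+\sinh r}{n}\bigr)$, which closes only for $r\le\ln(1+\sqrt{2/n})$.
\begin{itemize}
\item For $n=2$ this is enough, since $\ln 2>\rho_0(2)=\tfrac12\ln 3$.
\item For every $n\ge 3$ it falls short of $\rho_0(n)$; for $n=3$ you get about $0.597$ versus $0.616$.
\item As $n\to\infty$ it tends to $0$, while $\rho_0(n)\to\tfrac32\ln\frac{1+\sqrt5}{2}\approx 0.72$.
\end{itemize}
Also, the proposition is only a sufficient condition. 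The constant $\rho_0(n)$ is produced by the proof and is not claimed to be sharp, so there is nothing to match ``exactly'' through sharp integral comparisons at $R$.

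\paragraph{The missing idea: a first-zero argument.} In the paper's argument the ODE itself eliminates $v_B$, so no bounds on $u$ or $w$ are needed. Set $h=\varphi'-\varphi$. Then $h(0)>0$, and if $r_1\in(0,R)$ is a first zero, then $h'(r_1)\le 0$.

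At $r_1$, equation \eqref{Equation-torsion-radial} with $\varphi'=\varphi$ gives
\[
-\frac{1+\varphi^2}{v_B}=\bigl(1+(n-1)\coth r_1\bigr)\varphi .
\]
Differentiate \eqref{Equation-torsion-radial} and substitute this expression for $1/v_B$ and $1/v_B^2$. Then drop the nonnegative terms, which carry a factor $\varphi^2/(1+\varphi^2)$. This yields
\[
h'(r_1)\ge(n-1)\varphi\Bigl(\coth^2 r_1-\coth r_1-\frac{n}{n-1}\Bigr)>0,
\]
where positivity holds because $r_1<R<\rho_0(n)$. This contradicts $h'(r_1)\le 0$.

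This is where the quadratic $(n-1)c^2-(n-1)c-n$ you spotted actually comes from. It is evaluated at $c=\coth r_1$, the radius where positivity first fails, not at a value frozen at $R$. Your closing mention of a Riccati-type ODE argument points in this direction, but as written it is not a proof.
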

\begin{proof}
By \eqref{Equality-radial Hessian}, the radial eigenvalue of $\nabla^2v_B-|\nabla v_B|g$ is $\varphi'-\varphi$, while its tangential eigenvalue is $(\coth r-1)\varphi$ with multiplicity $n-1$. Since $\varphi>0$ and $\coth r>1$, it remains to prove that $$\varphi'-\varphi>0.$$

Let $h=\varphi'-\varphi$. At the center, since $\varphi(0)=0$, taking the limit $r\rightarrow 0$ in \eqref{Equation-torsion-radial} and applying L'H\^{o}pital's rule yield $$h(0)=\varphi'(0)=-\frac{1}{nv_B(0)}>0.$$ Arguing by contradiction, suppose that $h$ vanishes somewhere in $(0,R)$. Denoting its first zero by $r_1$, one has $$h(r)>0\quad\mathrm{for}\ 0\leq r<r_1,\quad h(r_1)=0,\quad h'(r_1)\leq 0,$$ while $h(r_1)=0$ means that $\varphi'(r_1)=\varphi(r_1)$, and \eqref{Equation-torsion-radial} at $r_1$ takes the form
\begin{equation}\label{Equation-torsion-radial-r_1}
(1+(n-1)\coth r_1)\varphi=-\frac{1+\varphi^2}{v_B}.
\end{equation}
Differentiating \eqref{Equation-torsion-radial} gives $$\varphi''=(n-1)\mathrm{csch}^2r\varphi-(n-1)\coth r\varphi'-\frac{2}{v_B}\varphi\varphi'+\frac{\varphi(1+\varphi^2)}{v_B^2},$$ which, together with \eqref{Equation-torsion-radial-r_1} and $\varphi'(r_1)=\varphi(r_1)$, yields at $r_1$
\begin{align*}
h'
=&~\varphi''-\varphi'\\
=&~(n-1)\mathrm{csch}^2r_1\varphi-(n-1)\coth r_1\varphi-\frac{2}{v_B}\varphi^2+\frac{\varphi(1+\varphi^2)}{v_B^2}-\varphi\\
=&~\varphi\bigg((n-1)\mathrm{csch}^2r_1-(1+(n-1)\coth r_1)\\
&\hspace{0.3cm}+\frac{2(1+(n-1)\coth r_1)\varphi^2}{1+\varphi^2}+\frac{(1+(n-1)\coth r_1)^2\varphi^2}{1+\varphi^2}\bigg)\\
\geq&~\varphi\left((n-1)\mathrm{csch}^2r_1-1-(n-1)\coth r_1\right).
\end{align*}
Moreover, in view of the identity $\mathrm{csch}^2r_1=\coth^2r_1-1$, we have $$h'\geq(n-1)\varphi\left(\coth^2r_1-\coth r_1-\frac{n}{n-1}\right).$$ The right-hand side is positive whenever $$\coth r_1>\frac{1+\sqrt{5+4/(n-1)}}{2}.$$ Since $$\rho_0(n)=\mathrm{arccoth}\left(\frac{1+\sqrt{5+4/(n-1)}}{2}\right),$$ for $R<\rho_0(n)$, it follows from $r_1<R<\rho_0(n)$ and the strict decrease of $\coth r$ that $$\coth r_1>\coth\rho_0(n).$$
This yields $h'(r_1)>0$, whereas $h'(r_1)\leq 0$, a contradiction. Hence $h>0$ on $[0,R)$, and the proposition follows.
\end{proof}

We next derive the estimate required in the constant rank theorem.
\begin{proposition}\label{Proposition-uniform estimate}
Let $n\geq 2$, and let $\Omega\subset\mathbb{H}^n$ be contained in a geodesic ball $B_R$ of radius $R>0$. If $u_\Omega$ is the torsion function of $\Omega$ and $v_\Omega=-\sqrt{u_\Omega}$, then $$-\frac{R}{\sqrt{n}}<v_\Omega<0\quad\mathrm{in}\ \Omega.$$ In particular, if $R\leq 1/(2\sqrt{n})$, then
\begin{equation}\label{Uniform estimate}
-\frac{1}{2n}<v_\Omega<0\quad\mathrm{in}\ \Omega.
\end{equation}
\end{proposition}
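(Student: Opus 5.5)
We want $u_\Omega < R^2/n$ in $\Omega$. Then $v_\Omega=-\sqrt{u_\Omega}>-R/\sqrt n$, and $v_\Omega<0$ is immediate from $u_\Omega>0$ by the strong maximum principle. The plan is to compare $u_\Omega$ with the torsion function $u_B$ of the ball $B_R\supset\Omega$, and then to bound $u_B$ at the center explicitly.

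\textbf{Step 1: comparison with the ball.} On $\Omega$ we have $\Delta(u_B-u_\Omega)=0$. On $\partial\Omega\subset\overline{B_R}$ we have $u_B\geq 0=u_\Omega$, because $u_B>0$ in $B_R$ and $u_B=0$ on $\partial B_R$. The maximum principle then gives $u_\Omega\leq u_B$ in $\Omega$. Since $u_B'<0$ on $(0,R)$ by the formula of Section \ref{Section 3}, $u_B$ is maximized at the center, so $u_\Omega\leq u_B(0)$.

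\textbf{Step 2: explicit bound for $u_B(0)$.} From the formula
$$u_B'(r)=-2(\sinh r)^{1-n}\int_0^r\sinh^{n-1}s\,\dd s,$$
I will show $|u_B'(r)|<2r/n$ for $r>0$. The claim $n\int_0^r\sinh^{n-1}s\,\dd s<r\sinh^{n-1}r$ holds because both sides vanish at $r=0$ and their derivatives compare as
$$n\sinh^{n-1}r<\sinh^{n-1}r+(n-1)r\sinh^{n-2}r\cosh r,$$
which is equivalent to $\sinh r<r\cosh r$, i.e.\ $\tanh r<r$, true for $r>0$. Integrating,
$$u_B(0)=\int_0^R|u_B'|\,\dd r<\frac{2}{n}\int_0^R r\,\dd r=\frac{R^2}{n}.$$
Hence $u_\Omega<R^2/n$, and therefore $v_\Omega>-R/\sqrt n$. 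This is where the hyperbolic geometry enters: it only improves on the Euclidean value $R^2/n$, precisely because $\tanh r<r$.

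\textbf{Step 3: the special case.} If $R\leq 1/(2\sqrt n)$, then $R/\sqrt n\leq 1/(2n)$, which gives \eqref{Uniform estimate}.

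The only step needing care is the elementary inequality in Step 2. An alternative would be to use $(n-1)\coth r>(n-1)/r$ in \eqref{Equation-torsion-radial-u} to compare with the Euclidean radial torsion function $(R^2-r^2)/n$ via an ODE/maximum principle argument, but the integral computation is more direct.
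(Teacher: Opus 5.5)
Your proof is correct and follows the paper's argument: compare $u_\Omega$ with the radial torsion function of $B_R$ by the maximum principle, then use the explicit integral formula to bound $u_B(0)<R^2/n$. The one small difference is in the elementary inequality $\int_0^t(\sinh s/\sinh t)^{n-1}\,\dd s<t/n$: the paper bounds the integrand pointwise using $\sinh s/\sinh t<s/t$, while you compare derivatives and reduce to $\tanh r<r$, which is an equivalent and equally valid route.
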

\begin{proof}
Setting $A(r)=\sinh^{n-1}r$ and multiplying equation \eqref{Equation-torsion-radial-u} by $A(r)$ yield $$(A(r)u_B'(r))'=-2A(r).$$ Since $u_B'(0)=0$, integration over $[0,r]$ gives $$u_B'(r)=-\frac{2}{A(r)}\int_0^rA(s)\dd s.$$ Using $u_B(R)=0$, we further obtain $$u_B(r)=2\int_r^R\frac{1}{A(t)}\left(\int_0^tA(s)\dd s\right)\dd t.$$ In particular, $$u_B(0)=2\int_0^R\frac{1}{\sinh^{n-1}t}\left(\int_0^t\sinh^{n-1}s\dd s\right)\dd t.$$ Notice that $\sinh\tau/\tau$ is increasing on $(0,\infty)$, and hence $\sinh s/\sinh t<s/t$ for $0<s<t$. It follows that
\begin{align*}
\frac{1}{\sinh^{n-1}t}\int_0^t\sinh^{n-1}s\dd s
=&~\int_0^t\left(\frac{\sinh s}{\sinh t}\right)^{n-1}\dd s\\
<&~\int_0^t\left(\frac{s}{t}\right)^{n-1}\dd s=\frac{t}{n}.
\end{align*}
Therefore, $u_B(0)<R^2/n$. From $u_B'<0$ in $(0,R)$, we deduce $0<u_B<R^2/n$ in $B_R$, which implies $$-\frac{R}{\sqrt{n}}<v_B<0\quad\mathrm{in}\ B_R.$$ Moreover, if $\Omega\subset B_R$, the comparison principle yields $u_\Omega\leq u_B$ in $\Omega$, and hence $$-\frac{R}{\sqrt{n}}<v_\Omega<0\quad\mathrm{in}\ \Omega,$$ which proves the proposition.
\end{proof}

Finally, a direct computation shows that $$\frac{1}{2\sqrt{n}}<\rho_0(n)$$ for every $n\geq 2$. Therefore, the choice $$r_0(n)=\frac{1}{2\sqrt{n}}$$ guarantees both the strict positivity of $\Lambda$ on geodesic balls and the uniform estimate \eqref{Uniform estimate}.

\section{Boundary convexity estimate}\label{Section 4}

We next establish the boundary convexity estimate needed in the proof of the main theorem. The argument closely follows the proof of Wei and Xiao \cite[Lemma 4.1]{Wei-Xiao2025} and is included for completeness.
\begin{proposition}[Boundary convexity estimate]\label{Proposition-boundary estimate}
Let $\Omega$ be a bounded, smooth, strictly horo-convex domain in $\mathbb{H}^n$ for $n\geq 2$, and let $u$ solve \eqref{Equation-torsion problem}. For $v=-\sqrt{u}$, there is a small neighborhood $\mathcal{N}$ of $\partial\Omega$ such that $$\nabla^2v-|\nabla v|g>0\quad \mathrm{in}\ \mathcal{N}\cap\Omega.$$
\end{proposition}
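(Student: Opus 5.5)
The plan is a direct computation near $\partial\Omega$, using only the boundary behavior of $u$. Since $\Delta u=-2<0$ and $u=0$ on $\partial\Omega$, the Hopf lemma together with smoothness of $u$ up to the boundary gives $|\nabla u|\geq c_0>0$ on $\partial\Omega$. By continuity, $|\nabla u|\geq c_0/2$ in a neighborhood $\mathcal{N}_0$ of $\partial\Omega$. In $\mathcal{N}_0\cap\Omega$ the level sets of $u$ are therefore smooth hypersurfaces, and we use an orthonormal frame $\{e_1,\dots,e_{n-1},e_n\}$ with $e_n=-\nabla u/|\nabla u|$, the outward normal of the level set $\{u=t\}$.

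First express $\Lambda$ in terms of $u$. With $v=-\sqrt{u}$ we have $\nabla v=-\nabla u/(2\sqrt u)$, $|\nabla v|=|\nabla u|/(2\sqrt u)$, and
\begin{equation*}
\nabla^2 v=-\frac{\nabla^2u}{2\sqrt u}+\frac{\nabla u\otimes\nabla u}{4u^{3/2}}.
\end{equation*}
Hence
\begin{equation*}
2\sqrt u\,\Lambda=-\nabla^2u-|\nabla u|\,g+\frac{\nabla u\otimes\nabla u}{2u}.
\end{equation*}
We analyze the blocks of this matrix in the adapted frame.

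\emph{Tangential block} ($1\le\alpha,\beta\le n-1$). The term $\nabla u\otimes\nabla u$ vanishes here, and $-u_{\alpha\beta}=|\nabla u|\,\mathrm{II}_{\{u=t\}}(e_\alpha,e_\beta)$ with respect to $e_n$. So this block equals $|\nabla u|\,(\mathrm{II}_{\{u=t\}}-g)_{\alpha\beta}$. On $\partial\Omega$ it is at least $c_0\delta_0\, I$, where $\delta_0>0$ is the strict horo-convexity gap $\mathrm{II}_{\partial\Omega}-g_{\partial\Omega}\geq\delta_0 g_{\partial\Omega}$ (compactness of $\partial\Omega$). Since $u\in C^2(\overline\Omega)$, the second fundamental forms of the level sets vary continuously, so after shrinking the neighborhood the block is at least $c_1 I$ with $c_1>0$.

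\emph{Normal entry}. It is $-u_{nn}-|\nabla u|+|\nabla u|^2/(2u)$. It is bounded below by $c_0^2/(8u)-C$, which tends to $+\infty$ as $u\to0$.

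\emph{Mixed entries}. These are $-u_{\alpha n}$, which are bounded by $\|u\|_{C^2(\overline\Omega)}$.

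Conclude by a Schur complement argument. A symmetric matrix whose tangential block is at least $c_1 I$, whose mixed entries are bounded by $C$, and whose normal entry is at least $c_0^2/(8u)-C$ is positive definite as soon as
\begin{equation*}
\frac{c_0^2}{8u}-C>\frac{(n-1)C^2}{c_1}.
\end{equation*}
This holds for $u<\epsilon$. Taking $\mathcal{N}$ to be the intersection of the previous neighborhoods with $\{u<\epsilon\}$, we get $2\sqrt u\,\Lambda>0$, hence $\Lambda>0$, in $\mathcal{N}\cap\Omega$.

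The main obstacle I expect is the uniformity in the tangential block: one must make sure that the level sets inherit strict horo-convexity with a uniform gap. This relies on global $C^2$ regularity of $u$ up to the smooth boundary and on the uniform Hopf bound $|\nabla u|\ge c_0$, both of which I would take from standard Schauder theory.
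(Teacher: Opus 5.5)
Your proposal is correct and follows essentially the same argument as the paper. Both proofs use the identity for $2\sqrt{u}\,\Lambda$, obtain positivity on tangential directions from the Hopf lemma and strict horo-convexity (via $-\nabla^2u(e,e)=|\nabla u|\,\mathrm{II}(e,e)$), and use the blow-up of $|\nabla u|^2/(2u)$ in the normal direction. The only difference is in the last step: you combine these pieces with a Schur complement in an adapted frame, whereas the paper splits unit vectors according to whether $|e^\bot|\leq\eta$ or $|e^\bot|\geq\eta$.
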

\begin{proof}
Let $\nu$ denote the unit outward normal to $\partial\Omega$. By the maximum principle and the Hopf lemma, we have $u>0$ in $\Omega$ and $\nabla_\nu u<0$ on $\partial\Omega$. Since $u=0$ on $\partial\Omega$, all its tangential derivatives vanish there, and hence $$\nabla u=(\nabla_\nu u)\nu=-|\nabla u|\nu\quad\mathrm{on}\ \partial\Omega.$$ For any $p\in\partial\Omega$ and $e\in T_p\partial\Omega$, we have $$\nabla^2u(e,e)=e(e(u))-(\nabla_ee)u=-\Braket{\nabla_ee,\nabla u}=\mathrm{II}(e,e)\nabla_\nu u=-\mathrm{II}(e,e)|\nabla u|,$$ where $\mathrm{II}$ is the second fundamental form of $\partial\Omega$ and $\Braket{\cdot,\cdot}$ denotes the inner product induced by the hyperbolic metric $g$.

Direct computation in $\Omega$ yields $$\nabla v=-\frac{\nabla u}{2u^{1/2}},\quad |\nabla v|=\frac{|\nabla u|}{2u^{1/2}}$$ and $$\nabla^2v=\frac{\nabla u\otimes\nabla u}{4u^{3/2}}-\frac{\nabla^2u}{2u^{1/2}}.$$ Therefore, for any vector $e$,
\begin{equation}\label{Boundary convexity estimate-general}
\nabla^2v(e,e)-|\nabla v||e|^2=\frac{1}{2u^{1/2}}\left(\frac{|\nabla_eu|^2}{2u}-\nabla^2u(e,e)-|\nabla u||e|^2\right).
\end{equation}
When $e\in T_p\partial\Omega$ is a unit tangential vector, strict horo-convexity implies
\begin{equation}\label{Boundary convexity estimate-tangential}
-\nabla^2u(e,e)-|\nabla u||e|^2=|\nabla u|(\mathrm{II}(e,e)-|e|^2)>0.
\end{equation}

Near $\partial\Omega$, write a general unit vector as $e=e^\top+e^\bot$, where $e^\bot=\Braket{e,\nabla u/|\nabla u|}\nabla u/|\nabla u|$. Then $$|\nabla_eu|^2=\Braket{e,\nabla u}^2=\left\langle e,|\nabla u|\frac{\nabla u}{|\nabla u|}\right\rangle^2=|\nabla u|^2\left\langle e,\frac{\nabla u}{|\nabla u|}\right\rangle^2=|\nabla u|^2|e^\bot|^2,$$ and substituting into \eqref{Boundary convexity estimate-general}, we obtain $$\nabla^2v(e,e)-|\nabla v||e|^2
=\frac{1}{2u^{1/2}}\left(\frac{|\nabla u|^2|e^\bot|^2}{2u}-\nabla^2u(e,e)-|\nabla u||e|^2\right).$$ By \eqref{Boundary convexity estimate-tangential}, compactness and continuity, there exists $\eta>0$ such that $\nabla^2v(e,e)-|\nabla v||e|^2>0$ near $\partial\Omega$ for every unit vector $e$ satisfying $|e^\bot|\leq\eta$. Since $u\rightarrow 0$, $|\nabla u|$ is bounded away from zero, and $\nabla^2u$ is bounded near $\partial\Omega$, we have $$\nabla^2v(e,e)-|\nabla v||e|^2\rightarrow+\infty$$ uniformly for $|e^\bot|\geq\eta$ as $x\rightarrow\partial\Omega$. Hence the tensor $\nabla^2v-|\nabla v|g$ is positive definite in a small neighborhood of $\partial\Omega$, which completes the proof.
\end{proof}

\section{The constant rank theorem}\label{Section 5}

We now turn to a key result of the paper, namely the constant rank theorem for the tensor $\Lambda=\nabla^2v-|\nabla v|g$ defined in \eqref{Tensor}, which states that $\Lambda$ has constant rank whenever it is nonnegative definite and $v$ satisfies the uniform estimate \eqref{Uniform estimate}.
\begin{proposition}[Constant rank theorem]\label{Proposition-constant rank theorem}
Let $\Omega$ be a bounded, smooth domain in $\mathbb{H}^n$ for $n\geq 2$, and let $v\in C^\infty(\Omega)$ be a solution of
\begin{equation}\label{Equation-v}
\Delta v=-\frac{1+|\nabla v|^2}{v}
\end{equation}
satisfying
\begin{equation}\label{Assumption}
-\frac{1}{2n}<v<0\quad\mathrm{in}\ \Omega.
\end{equation}
If $\Lambda=\nabla^2v-|\nabla v|g\geq 0$ in $\Omega$, then $\Lambda$ has constant rank in $\Omega$.
\end{proposition}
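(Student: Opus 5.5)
We follow the standard Korevaar--Lewis/Caffarelli--Friedman strategy in the form used by Bian--Guan and Wei--Xiao \cite{Wei-Xiao2025}. Let $l$ be the minimal rank of $\Lambda$ in $\Omega$, attained at some $x_0$, and set $\phi=\sigma_{l+1}(\Lambda)$ (plus, if needed, the usual correction $\sigma_{l+2}/\sigma_{l+1}$ term to handle the nonsmoothness; for simplicity we work with $\sigma_{l+1}$ and the Bian--Guan quotient trick). Near $x_0$ we have $\phi\geq 0$ and $\phi(x_0)=0$. The goal is to show a differential inequality
\begin{equation*}
\Delta\phi\leq C\left(\phi+|\nabla\phi|\right)
\end{equation*}
in a neighborhood $\mathcal{O}$ of $x_0$. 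Then the strong maximum principle gives $\phi\equiv0$ on $\mathcal{O}$, so the set where the rank equals $l$ is open; it is closed by continuity, and connectedness of $\Omega$ yields constant rank.

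To prove the inequality, fix a point $x\in\mathcal{O}$ and choose an orthonormal frame diagonalizing $\Lambda$ with $\Lambda_{11}\le\dots\le\Lambda_{nn}$. Split the indices into the ``bad'' set $B=\{1,\dots,n-l\}$ (eigenvalues that are $O(\phi)$-small) and the ``good'' set $G$ (eigenvalues bounded below). Using Lemma \ref{Lemma-sigma_k}, modulo $O(\phi+|\nabla\phi|)$ one has $\phi\sim\sigma_l(G)\sum_{i\in B}\Lambda_{ii}$, and the first derivatives give $\sum_{i\in B}\Lambda_{ii\alpha}=O(\phi+|\nabla\phi|)$. Then
\begin{equation*}
\Delta\phi\sim\sigma_l(G)\sum_{i\in B}\Big(\sum_\alpha\Lambda_{ii\alpha\alpha}-2\sum_{j\in G}\frac{\Lambda_{ij\alpha}^2}{\Lambda_{jj}}\Big).
\end{equation*}
Next compute $\sum_\alpha\Lambda_{ii\alpha\alpha}$ for $i\in B$. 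Write $\Lambda_{ii}=v_{ii}-|\nabla v|$. Use \eqref{Formula-commutation formula-4th} to convert $\sum_\alpha v_{ii\alpha\alpha}$ into $(\Delta v)_{ii}-2nv_{ii}+2\Delta v$, and differentiate \eqref{Equation-v} twice in direction $e_i$. Also compute $\Delta|\nabla v|$ via Bochner (Ricci $=-(n-1)$). Third derivatives $v_{ij\alpha}$ are expressed through $\Lambda_{ij\alpha}$ plus $|\nabla v|_\alpha\delta_{ij}$. Crucially, $v_{ii}=|\nabla v|+O(\phi)$ for $i\in B$, and with \eqref{Formula-commutation formula-3rd} this gives relations forcing $v_{i\alpha}\approx|\nabla v|\delta_{i\alpha}$ on $B$. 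Consequently, in the bad directions the gradient $\nabla v$ is essentially orthogonal to $B$ unless things degenerate, and the quadratic terms in third derivatives can be grouped.

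The main obstacle is the sign of the remaining zeroth- and second-order terms. After discarding $O(\phi+|\nabla\phi|)$ terms, one must show that the sum of the terms quadratic in $\Lambda_{ij\alpha}$ with $i\in B$, coming from the nonlinearity $(1+|\nabla v|^2)/v$ (terms like $\frac{2}{v}\sum v_{\alpha i}v_{\alpha i}$ and $-\frac{2}{v}v_\alpha v_{\alpha ii}$), the Bochner term $|\nabla^2 v|^2/|\nabla v|$ minus $|\nabla|\nabla v||^2/|\nabla v|$, and $-2\Lambda_{ij\alpha}^2/\Lambda_{jj}$, is $\le0$. The curvature/zeroth-order terms such as $-2n v_{ii}+2\Delta v$ and the terms $\frac{1+|\nabla v|^2}{v^2}\,v_{ii}$ must combine with a definite sign. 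Here $-1/v>2n$ from \eqref{Assumption} is what should make the $v$-dependent terms (replacing the eigenvalue $\lambda_1$ in Wei--Xiao) dominate the curvature contribution $-2nv_{ii}$. I would first collect terms treating $\Lambda_{ij\alpha}$ for $\alpha\in B$ versus $\alpha\in G$ separately. For the $\alpha\in G$ mixed terms, I would complete squares using $1/\Lambda_{jj}$. For the pure-bad terms, I would use $-1/v$ large together with the Cauchy--Schwarz inequality to absorb cross terms involving $v_\alpha$. I expect this algebraic bookkeeping, specifically verifying that the coefficient $2n$ suffices in the threshold \eqref{Assumption}, to be the delicate step.
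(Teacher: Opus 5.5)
Your framework (minimal rank $l$, $\phi=\sigma_{l+1}(\Lambda)$, good/bad splitting, target $\Delta\phi\le C(\phi+|\nabla\phi|)$, strong maximum principle) is exactly the paper's. However, you defer precisely the step that constitutes the proof, and you also skip a necessary preliminary step.

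\textbf{Missing preliminary step.} Since $\Lambda$ contains $w=|\nabla v|$, neither $\Lambda$ nor $\phi$ need be $C^2$ near a critical point of $v$. Your Bochner computation of $\Delta w$ also divides by $w$. The Bian--Guan quotient $\sigma_{l+2}/\sigma_{l+1}$ is again a function of $\Lambda$ and does nothing about this. The paper handles it in two moves:
\begin{itemize}
\item It first rules out $l=0$ via $\sigma_1(\Lambda)=-\frac{1}{v}\big((w+\frac{n}{2}v)^2+1-\frac{n^2}{4}v^2\big)>0$.
\item It then shows $\nabla v(x_0)\neq 0$. Otherwise choose eigenvectors with $v_{11}(x_0)>0=v_{nn}(x_0)$, extended by parallel translation. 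The smooth function $v_{nn}-\langle\nabla v,e_1\rangle\ge\Lambda_{nn}\ge 0$ is minimized at $x_0$, giving $v_{nn1}(x_0)=v_{11}(x_0)>0$. But $\nabla^2 v\ge 0$ together with $v_{nn}(x_0)=0$ forces $v_{nn1}(x_0)=0$, a contradiction.
\end{itemize}

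\textbf{Main gap: the sign verification.} This is the whole content of the proposition, and the mechanism you guess is not the one that works. The curvature terms $-2nv_{ii}+2\Delta v\sim -2(1+w^2+nvw)/v$ from the fourth-order commutator are not dominated via $-1/v>2n$. They cancel exactly against the piece $-2\sum_{\alpha\in G}\lambda_\alpha$ of $-w^{-1}\sum_\alpha(1-a_\alpha^2)v_{\alpha\alpha}^2$ coming from $-\Delta w$, where $a_\alpha=v_\alpha/w$.

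Two positive terms survive:
\begin{itemize}
\item $-2v^{-3}w^2\big(1+(w-v)^2\big)\sigma_l(G)\sum_{i\in B}a_i^2$, which grows like $|v|^{-3}$, so ``$-1/v$ large'' makes it worse.
\item $2(n-l)\sigma_l(G)\sum_{\alpha\in G}a_\alpha^2\lambda_\alpha$, which is not even weighted by $\sum_{i\in B}a_i^2$.
\end{itemize}

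Both are removed only by extracting specific negative terms from $-2\sigma_l(G)\sum\Lambda_{ij\alpha}^2/\lambda_i$. The commutation formula $v_{jii}=v_{iij}-v_j$, together with $\sum_{i\in B}\Lambda_{ii\alpha}\sim 0$, $w_i=a_iv_{ii}$ and the differentiated equation, gives
\begin{itemize}
\item $\sum_{j\in B}\Lambda_{ijj}\sim (n-l)a_i\lambda_i$ for $i\in G$;
\item $\sum_{i\in G}\Lambda_{iji}\sim a_jw\big(v^{-2}(1+w^2)-2v^{-1}w-n\big)$ for $j\in B$.
\end{itemize}
Cauchy--Schwarz then produces $-2(n-l)\sigma_l(G)\sum_{i\in G}a_i^2\lambda_i$, which cancels the second positive term. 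It also produces, via $\sum_{G}\lambda_i\sim -M/v$, a negative $|v|^{-3}$ term whose leading part cancels the first.

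Only then does one reach $\Delta\phi\lesssim\sigma_l(G)\sum_{i\in B}a_i^2\,L$ with $L=\frac{2w^2N}{v^2M}-\frac{M^2}{lv^2w}$ and $M=1+w^2+nvw$. The hypothesis on $v$ enters only here, to give $M\ge\frac{15}{16}$, $N\le\frac54$, and $w<\frac{5}{4n}$ whenever $N>0$. Hence $2lw^3N\le M^3$.

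Finally, your assertion that $\nabla v$ is essentially orthogonal to the bad directions is unjustified. It cannot be used to discard terms, since the weight $\sum_{i\in B}a_i^2$ need not be small.
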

\begin{proof}
Let $l$ be the minimal rank of $\Lambda$ in $\Omega$, $l=0,1,\cdots,n$, and suppose that this rank is attained at $x_0$. We define the auxiliary function $$\phi(x)=\sigma_{l+1}(\Lambda(x)).$$ Following the standard argument, it suffices to prove that there exists a neighborhood $\mathcal{O}$ of $x_0$ and a positive constant $C$, independent of $\phi$, such that 
\begin{equation}\label{Inequality-Goal}
\Delta\phi\leq C(\phi+|\nabla\phi|)\quad\mathrm{in}\ \mathcal{O}.
\end{equation}

For any $x\in\Omega$, let $\lambda_1\geq\lambda_2\geq\cdots\geq\lambda_n$ be the eigenvalues of $\Lambda$ at $x$. There is a positive constant $c>0$ such that $\lambda_1\geq\lambda_2\geq\cdots\geq\lambda_l\geq c$. Let $G=\{1,2,\cdots,l\}$ and $B=\{l+1,l+2,\cdots,n\}$ be the “good" and “bad" sets of indices, respectively. If there is no confusion, we also denote the “good" and “bad" eigenvalues of $\Lambda$ by $G$ and $B$, respectively.

Following the notation in \cite{Caffarelli-Friedman1985}, for two functions $h(x)$ and $k(x)$ defined in $\mathcal{O}$, we say that $h(x)\lesssim k(x)$ if there exists a positive constant $C_1$ such that $$h(x)-k(x)\leq C_1(\phi(x)+|\nabla\phi(x)|).$$ We also write $h(x)\sim k(x)$ if $h(x)\lesssim k(x)$ and $k(x)\lesssim h(x)$. Next, we write $h\lesssim k$ if the above inequality holds for any $x\in\mathcal{O}$, with constant $C_1$ independent of $x$. Finally, $h\sim k$ means $h\lesssim k$ and $k\lesssim h$.

For each fixed $x\in\mathcal{O}$, by choosing a local orthonormal frame $\{e_1,e_2,\cdots,e_n\}$, we may assume that $\Lambda$ is diagonal at $x$ and $\Lambda_{ii}=\lambda_i$ for any $i=1,2,\cdots,n$. The following calculations will be carried out at a fixed point $x$. When we use the relation $\lesssim$, all constants are under control.

The case $l=n$ is immediate, since $\Lambda$ is then of full rank in $\Omega$. Throughout the proof, set $w=|\nabla v|$. When $l=0$, the assumption \eqref{Assumption},  together with \eqref{Equation-v}, gives
\begin{equation}\label{sigma_1(Lambda)}
\begin{aligned}
\sigma_1(\Lambda)
=&~\Delta v-nw=-\frac{1}{v}-\frac{1}{v}w^2-nw\\
=&-\frac{1}{v}(w^2+nvw+1)\\
=&-\frac{1}{v}\left(\left(w+\frac{n}{2}v\right)^2+1-\frac{n^2}{4}v^2\right)\\
>&~0.
\end{aligned}
\end{equation}
Therefore, in what follows, we only need to consider the case $l=1,2,\cdots,n-1$.

We next verify that $w(x_0)\neq 0$. Suppose to the contrary that $w(x_0)=0$. Since $\Lambda(x_0)$ is degenerate and \eqref{sigma_1(Lambda)} holds, we may choose orthonormal eigenvectors $e_1$ and $e_n$ such that $$v_{11}(x_0)>0\quad\mathrm{and}\quad v_{nn}(x_0)=0.$$ Extend these vectors by parallel translation near $x_0$ and consider $$\widetilde{\Lambda}_{ij}=v_{ij}-\Braket{\nabla v,e_1}\delta_{ij}.$$ Since $|\nabla v|\geq\Braket{\nabla v,e_1}$, we have $\widetilde{\Lambda}\geq\Lambda\geq 0$ near $x_0$, and $\widetilde{\Lambda}_{nn}(x_0)=0$ therefore yields $$0=\widetilde{\Lambda}_{nn1}(x_0)=v_{nn1}(x_0)-v_{11}(x_0).$$ On the other hand, the nonnegative definiteness of $\nabla^2v=\Lambda+wg$, together with $v_{nn}(x_0)=0$, implies that $v_{nn}$ attains a local minimum at $x_0$, and therefore $v_{nn1}(x_0)=0$, which contradicts $v_{11}(x_0)>0$. By decreasing the size of $\mathcal{O}$ if necessary, we may assume that $$w>0\quad\mathrm{in}\ \mathcal{O}.$$

We now return to the auxiliary function $\phi$. Since $\Lambda$ is diagonal, we have that $$0\sim\phi\sim\sigma_l(G)\sum_{i\in B}\Lambda_{ii}\sim\sum_{i\in B}\Lambda_{ii},$$ and
\begin{equation}\label{Relation-Lambda_{ii}}
\Lambda_{ii}\sim 0,\quad i\in B.
\end{equation}
Equivalently,
\begin{equation}\label{Relation-v_{ii}=w}
v_{ii}\sim w,\quad i\in B.
\end{equation}
It follows from \eqref{Relation-Lambda_{ii}} that
\begin{equation}\label{Relation-sigma_l}
\sigma_l(\lambda|i)\sim\begin{cases}
0 & \mathrm{if}\ i\in G,\\
\sigma_l(G) & \mathrm{if}\ i\in B
\end{cases}
\end{equation}
holds. For $i\neq j$, we have
\begin{equation}\label{Relation-sigma_{l-1}}
\sigma_{l-1}(\lambda|ij)\sim
\begin{cases}
0 & \mathrm{if}\ i\in G, j\in G,\\
\sigma_{l-1}(G|i) & \mathrm{if}\ i\in G, j\in B,\\
\sigma_{l-1}(G|j) & \mathrm{if}\ i\in B, j\in G,\\
\sigma_{l-1}(G) & \mathrm{if}\ i\in B, j\in B.
\end{cases}
\end{equation}
Differentiating $\phi(x)$ and applying \eqref{Equality-sigma_{k-1}} and \eqref{Relation-sigma_l}, we obtain
\begin{align*}
0\sim\phi_\alpha
=&\sum_{i,j}\frac{\partial\sigma_{l+1}}{\partial \Lambda_{ij}}\Lambda_{ij\alpha}=\sum_i\sigma_l(\lambda|i)\Lambda_{ii\alpha}\\
=&\sum_{i\in G}\sigma_l(\lambda|i)\Lambda_{ii\alpha}+\sum_{i\in B}\sigma_l(\lambda|i)\Lambda_{ii\alpha}\\
\sim&~\sigma_l(G)\sum_{i\in B}\Lambda_{ii\alpha},
\end{align*}
which implies
\begin{equation}\label{Relation-Lambda_{iialpha}}
\sum_{i\in B}\Lambda_{ii\alpha}\sim 0
\end{equation}
and
\begin{equation}\label{Relation-v_{iialpha}=(n-l)w_alpha}
\sum_{i\in B}v_{ii\alpha}\sim(n-l)w_\alpha.
\end{equation}

Combining \eqref{Relation-sigma_l}-\eqref{Relation-Lambda_{iialpha}} with Lemma \ref{Lemma-sigma_k}, for $\alpha=1,2,\cdots,n$ we calculate the second derivatives of $\phi(x)$,
\begin{align*}
\phi_{\alpha\alpha}
=&~\sum_{i,j}\frac{\partial\sigma_{l+1}}{\partial \Lambda_{ij}}\Lambda_{ij\alpha\alpha}+\sum_{i,j,p,q}\frac{\partial^2\sigma_{l+1}}{\partial \Lambda_{ij}\partial \Lambda_{pq}}\Lambda_{ij\alpha}\Lambda_{pq\alpha}\\
=&~\sum_i\sigma_l(\lambda|i)\Lambda_{ii\alpha\alpha}+\sum_{i\neq j}\sigma_{l-1}(\lambda|ij)\Lambda_{ii\alpha}\Lambda_{jj\alpha}-\sum_{i\neq j}\sigma_{l-1}(\lambda|ij)\Lambda_{ij\alpha}^2\\
\sim&~\sigma_l(G)\sum_{i\in B}\Lambda_{ii\alpha\alpha}+\bigg(\sum_{\substack{i\in G\\ j\in B}}+\sum_{\substack{i\in B\\ j\in G}}+\sum_{\substack{i,j\in G\\ i\neq j}}+\sum_{\substack{i,j\in B\\ i\neq j}}\bigg)\sigma_{l-1}(\lambda|ij)\Lambda_{ii\alpha}\Lambda_{jj\alpha}\\
&-\bigg(\sum_{\substack{i\in G\\ j\in B}}+\sum_{\substack{i\in B\\ j\in G}}+\sum_{\substack{i,j\in G\\ i\neq j}}+\sum_{\substack{i,j\in B\\ i\neq j}}\bigg)\sigma_{l-1}(\lambda|ij)\Lambda_{ij\alpha}^2\\
\sim&~\sigma_l(G)\sum_{i\in B}\Lambda_{ii\alpha\alpha}+\sum_{i\in G}\sigma_{l-1}(G|i)\Lambda_{ii\alpha}\sum_{j\in B}\Lambda_{jj\alpha}+\sum_{i\in B}\Lambda_{ii\alpha}\sum_{j\in G}\sigma_{l-1}(G|j)\Lambda_{jj\alpha}\\
&+\sigma_{l-1}(G)\sum_{\substack{i,j\in B\\ i\neq j}}\Lambda_{ii\alpha}\Lambda_{jj\alpha}-2\sum_{\substack{i\in G\\ j\in B}}\sigma_{l-1}(G|i)\Lambda_{ij\alpha}^2-\sigma_{l-1}(G)\sum_{\substack{i,j\in B\\ i\neq j}}\Lambda_{ij\alpha}^2\\
\sim&~\sigma_l(G)\sum_{i\in B}\Lambda_{ii\alpha\alpha}+\sigma_{l-1}(G)\sum_{\substack{i,j\in B\\ i\neq j}}\Lambda_{ii\alpha}\Lambda_{jj\alpha}-2\sum_{\substack{i\in G\\ j\in B}}\sigma_{l-1}(G|i)\Lambda_{ij\alpha}^2-\sigma_{l-1}(G)\sum_{\substack{i,j\in B\\ i\neq j}}\Lambda_{ij\alpha}^2.
\end{align*}
From condition \eqref{Relation-Lambda_{iialpha}}, we have $-\Lambda_{ii\alpha}\sim\sum_{\substack{j\in B\\ j\neq i}}\Lambda_{jj\alpha},$ and hence $$\sigma_{l-1}(G)\sum_{\substack{i,j\in B\\ i\neq j}}\Lambda_{ii\alpha}\Lambda_{jj\alpha}\sim-\sigma_{l-1}(G)\sum_{i\in B}\Lambda_{ii\alpha}^2.$$ Since $\sigma_{l-1}(G|i)=\sigma_l(G)/\lambda_i$ for $i\in G$, we conclude that $$\phi_{\alpha\alpha}\sim\sigma_l(G)\sum_{i\in B}\Lambda_{ii\alpha\alpha}-2\sigma_l(G)\sum_{\substack{i\in G\\ j\in B}}\frac{\Lambda_{ij\alpha}^2}{\lambda_i}-\sigma_{l-1}(G)\sum_{i,j\in B}\Lambda_{ij\alpha}^2.$$ Summing over $\alpha$ yields
\begin{equation}\label{Equality-F1}
\Delta\phi\sim\sigma_l(G)\sum_\alpha\sum_{i\in B}\Lambda_{ii\alpha\alpha}-2\sigma_l(G)\sum_\alpha\sum_{\substack{i\in G\\ j\in B}}\frac{\Lambda_{ij\alpha}^2}{\lambda_i}-\sigma_{l-1}(G)\sum_\alpha\sum_{i,j\in B}\Lambda_{ij\alpha}^2.
\end{equation}

We now calculate the first and second derivatives of $w$. From $w_i=(|\nabla v|)_i=\sum_jv_jv_{ji}/w$, set $a_i=v_i/w$, so that
\begin{equation}\label{Relation-a_i}
\sum_i a_i^2=1
\end{equation}
and
\begin{equation}\label{Relation-w_i=a_iv_{ii}}
w_i=a_iv_{ii}
\end{equation}
at $x$. By the commutation formula \eqref{Formula-commutation formula-3rd} on hyperbolic space, we obtain $$w_{ii}=-w^{-3}\sum_{j,k}v_jv_kv_{ji}v_{ki}+w^{-1}\sum_jv_{ji}^2+w^{-1}\sum_{j\neq i}v_j(v_{iij}-v_j)+w^{-1}v_iv_{iii},$$ which implies, at $x$,
\begin{equation}\label{Equality-w_{ii}}
w_{ii}=-w^{-1}a_i^2v_{ii}^2+w^{-1}v_{ii}^2+\sum_{j\neq i}a_j(v_{iij}-v_j)+a_iv_{iii}.
\end{equation}

Denote the three terms on the right-hand side of \eqref{Equality-F1} by $\circled{1}$, $\circled{2}$, and $\circled{3}$, respectively, and estimate them separately.

$\bullet$ \circled{1}. Note that $\Lambda_{ii\alpha\alpha}=v_{ii\alpha\alpha}-w_{\alpha\alpha}$. For fixed $i\in B$, the commutation formula \eqref{Formula-commutation formula-4th} and equation \eqref{Equation-v} give
\begin{align*}
\sum_\alpha v_{ii\alpha\alpha}
=&~(\Delta v)_{ii}-2nv_{ii}+2\Delta v\\
=&-2v^{-3}v_i^2(1+w^2)+v^{-2}v_{ii}(1+w^2)+4v^{-2}v_iww_i\\
&-2v^{-1}w_i^2-2v^{-1}ww_{ii}-2nv_{ii}-2v^{-1}(1+w^2).
\end{align*}
Using \eqref{Relation-v_{ii}=w}, \eqref{Relation-a_i}-\eqref{Equality-w_{ii}}, and $v_i=a_iw$, we obtain
\begin{equation}\label{Equality-v_{iialphaalpha}}
\begin{aligned}
\sum_\alpha v_{ii\alpha\alpha}
\sim&-2v^{-3}a_i^2w^2(1+w^2)+v^{-2}w(1+w^2)+4v^{-2}a_i^2w^3-2v^{-1}a_i^2w^2\\
&-2v^{-1}w\sum_\alpha a_\alpha v_{ii\alpha}-2nw-2v^{-1}(1+w^2).
\end{aligned}
\end{equation}
On the other hand, in view of \eqref{Equality-w_{ii}}, a straightforward calculation yields
\begin{equation}\label{Equality-w_{alphaalpha}}
\begin{aligned}
\sum_\alpha w_{\alpha\alpha}
=&~w^{-1}\sum_\alpha(1-a_\alpha^2)v_{\alpha\alpha}^2+\sum_\alpha a_\alpha(\Delta v)_\alpha-(n-1)w\\
=&~w^{-1}\sum_\alpha(1-a_\alpha^2)v_{\alpha\alpha}^2+v^{-2}w(1+w^2)-2v^{-1}w\sum_\alpha a_\alpha w_\alpha-(n-1)w.
\end{aligned}
\end{equation}
Subtracting \eqref{Equality-w_{alphaalpha}} from \eqref{Equality-v_{iialphaalpha}} and applying $v_{ii\alpha}=\Lambda_{ii\alpha}+w_\alpha$, we have
\begin{equation}\label{Equality-Lambda_{iialphaalpha}}
\begin{aligned}
\sum_\alpha\Lambda_{ii\alpha\alpha}
\sim&-2v^{-3}a_i^2w^2(1+w^2)+4v^{-2}a_i^2w^3-2v^{-1}a_i^2w^2-2v^{-1}w\sum_\alpha a_\alpha\Lambda_{ii\alpha}\\
&-(n+1)w-2v^{-1}(1+w^2)-w^{-1}\sum_\alpha(1-a_\alpha^2)v_{\alpha\alpha}^2.
\end{aligned}
\end{equation}
Since $v_{\alpha\alpha}=\lambda_\alpha+w$, it follows from \eqref{Relation-Lambda_{ii}} and \eqref{Relation-a_i} that the last term can be written as 
\begin{equation}
w^{-1}\sum_\alpha(1-a_\alpha^2)v_{\alpha\alpha}^2
\sim w^{-1}\sum_{\alpha\in G}(1-a_\alpha^2)\lambda_\alpha^2+2\sum_{\alpha\in G}\lambda_\alpha-2\sum_{\alpha\in G}a_\alpha^2\lambda_\alpha+(n-1)w.
\end{equation}
Moreover, by \eqref{Equation-v} and \eqref{Relation-Lambda_{ii}},
\begin{equation}\label{Equality-sum of lambda_i}
\sum_{\alpha\in G}\lambda_\alpha\sim-\frac{1+w^2+nvw}{v}.
\end{equation}
Combining \eqref{Equality-Lambda_{iialphaalpha}}-\eqref{Equality-sum of lambda_i}, we arrive at
\begin{align*}
\sum_\alpha\Lambda_{ii\alpha\alpha}
\sim&-2v^{-3}a_i^2w^2(1+w^2-2vw+v^2)-2v^{-1}w\sum_\alpha a_\alpha\Lambda_{ii\alpha}\\
&-w^{-1}\sum_{\alpha\in G}(1-a_\alpha^2)\lambda_\alpha^2+2\sum_{\alpha\in G}a_\alpha^2\lambda_\alpha.
\end{align*}
Therefore, the term \circled{1} becomes
\begin{align*}
\circled{1}
=&~\sigma_l(G)\sum_\alpha\sum_{i\in B}\Lambda_{ii\alpha\alpha}\\
\sim&-2v^{-3}w^2(1+w^2-2vw+v^2)\sigma_l(G)\sum_{i\in B}a_i^2\\
&-2v^{-1}w\sigma_l(G)\sum_\alpha a_\alpha\sum_{i\in B}\Lambda_{ii\alpha}-w^{-1}\sigma_l(G)\sum_{i\in B}\sum_{\alpha\in G}(1-a_\alpha^2)\lambda_\alpha^2\\
&+2(n-l)\sigma_l(G)\sum_{\alpha\in G}a_\alpha^2\lambda_\alpha.
\end{align*}
For $i\in B$ and $\alpha\in G$, $1-a_\alpha^2=\sum_{\beta\neq\alpha}a_\beta^2\geq a_i^2$. Combining this with \eqref{Relation-Lambda_{iialpha}} yields the estimate
\begin{equation}\label{Estimate-1}
\begin{aligned}
\circled{1}
\lesssim&-2v^{-3}w^2(1+w^2-2vw+v^2)\sigma_l(G)\sum_{i\in B}a_i^2-w^{-1}\sigma_l(G)\sum_{\alpha\in G}\lambda_\alpha^2\sum_{i\in B}a_i^2\\
&+2(n-l)\sigma_l(G)\sum_{\alpha\in G}a_\alpha^2\lambda_\alpha.
\end{aligned}
\end{equation}

$\bullet$ \circled{2}. Observe that $$-2\sigma_l(G)\sum_\alpha\sum_{\substack{i\in G\\ j\in B}}\frac{\Lambda_{ij\alpha}^2}{\lambda_i}\leq-2\sigma_l(G)\sum_{i\in G}\sum_{j\in B}\frac{\Lambda_{iji}^2}{\lambda_i}-2\sigma_l(G)\sum_{i\in G}\sum_{j\in B}\frac{\Lambda_{ijj}^2}{\lambda_i}.$$ For the first term on the right-hand side, fixing $j\in B$ and applying the Cauchy-Schwarz inequality gives $$-\sum_{i\in G}\frac{\Lambda_{iji}^2}{\lambda_i}\leq-\frac{(\sum_{i\in G}\Lambda_{iji})^2}{\sum_{i\in G}\lambda_i}.$$ By the commutation formula \eqref{Formula-commutation formula-3rd}, $$\sum_{i\in G}\Lambda_{iji}=\sum_{i\in G}v_{iji}=\sum_{i\in G}(v_{iij}-v_j).$$ Noting that $$\sum_{i\in G}v_{iij}+\sum_{i\in B}v_{iij}=(\Delta v)_j=v^{-2}v_j(1+w^2)-2v^{-1}ww_j$$ and using \eqref{Relation-v_{ii}=w}, \eqref{Relation-v_{iialpha}=(n-l)w_alpha}, \eqref{Relation-w_i=a_iv_{ii}}, we obtain
\begin{align*}
\sum_{i\in G}v_{iij}
\sim&~v^{-2}v_j(1+w^2)-2v^{-1}ww_j-(n-l)w_j\\
\sim&~v^{-2}a_jw(1+w^2)-2v^{-1}a_jw^2-(n-l)a_jw.
\end{align*}
Thus, $$\sum_{i\in G}\Lambda_{iji}\sim a_jw\big(v^{-2}(1+w^2)-2v^{-1}w-n\big).$$ Furthermore,
\begin{equation}\label{Estimate-2_1}
-\sum_{i\in G}\frac{\Lambda_{iji}^2}{\lambda_i}\lesssim-\frac{1}{\sum_{i\in G}\lambda_i}a_j^2w^2\big(v^{-2}(1+w^2)-2v^{-1}w-n\big)^2.
\end{equation}

For the second term, fix $i\in G$. The Cauchy-Schwarz inequality yields $$-\sum_{j\in B}\Lambda_{ijj}^2\leq-\frac{1}{n-l}\big(\sum_{j\in B}\Lambda_{ijj}\big)^2.$$ The commutation formula \eqref{Formula-commutation formula-3rd}, together with \eqref{Relation-v_{iialpha}=(n-l)w_alpha} and \eqref{Relation-w_i=a_iv_{ii}}, then implies $$\sum_{j\in B}\Lambda_{ijj}=\sum_{j\in B}v_{ijj}=\sum_{j\in B}(v_{jji}-v_i)\sim(n-l)(w_i-v_i)=(n-l)a_i\lambda_i.$$ It follows that
\begin{equation}\label{Estimate-2_2}
-\sum_{j\in B}\frac{\Lambda_{ijj}^2}{\lambda_i}\lesssim-(n-l)a_i^2\lambda_i.
\end{equation}
Combining \eqref{Estimate-2_1} and \eqref{Estimate-2_2}, we obtain the estimate
\begin{equation}\label{Estimate-2}
\begin{aligned}
\circled{2}
\lesssim&-\frac{2}{\sum_{i\in G}\lambda_i}v^{-4}w^2(1+w^2-2vw-nv^2)^2\sigma_l(G)\sum_{j\in B}a_j^2\\
&-2(n-l)\sigma_l(G)\sum_{i\in G}a_i^2\lambda_i.
\end{aligned}
\end{equation}

$\bullet$ \circled{3}. The third term \circled{3} in \eqref{Equality-F1} is nonpositive and may be discarded.

Substituting the estimates \eqref{Estimate-1} and \eqref{Estimate-2} into \eqref{Equality-F1}, it follows that
\begin{equation}\label{Inequality-F1}
\begin{aligned}
\Delta\phi
\lesssim&~\sigma_l(G)\sum_{i\in B}a_i^2\bigg(-2v^{-3}w^2(1+w^2-2vw+v^2)-w^{-1}\sum_{\alpha\in G}\lambda_\alpha^2\\
&\hspace{2.4cm}-\frac{2}{\sum_{i\in G}\lambda_i}v^{-4}w^2(1+w^2-2vw-nv^2)^2\bigg).
\end{aligned}
\end{equation}
By exploiting the Cauchy-Schwarz inequality $$\sum_{\alpha\in G}\lambda_\alpha^2\geq\frac{1}{l}\big(\sum_{\alpha\in G}\lambda_\alpha\big)^2,$$ together with the previously established relation \eqref{Equality-sum of lambda_i}, the estimate \eqref{Inequality-F1} can be further estimated by
\begin{align*}
\Delta\phi
\lesssim&~\sigma_l(G)\sum_{i\in B}a_i^2\bigg(-\frac{2w^2(1+w^2-2vw+v^2)}{v^3}-\frac{(1+w^2+nvw)^2}{lv^2w}\\
&\hspace{2.4cm}+\frac{2w^2(1+w^2-2vw-nv^2)^2}{v^3(1+w^2+nvw)}\bigg).
\end{align*}

Set $$L=-\frac{2w^2(1+w^2-2vw+v^2)}{v^3}-\frac{(1+w^2+nvw)^2}{lv^2w}+\frac{2w^2(1+w^2-2vw-nv^2)^2}{v^3(1+w^2+nvw)}.$$ In order to establish \eqref{Inequality-Goal}, it remains to prove that $L\leq 0$. The combination of the first and third terms yields, after a direct computation,
\begin{align*}
L
=&~\frac{2w^2}{v^2(1+w^2+nvw)}\big(-(n+2)w(1+w^2)+v(-2n-1+3w^2)+3nv^2w+n^2v^3\big)\\
&-\frac{(1+w^2+nvw)^2}{lv^2w}.
\end{align*}
Now define $$M=1+w^2+nvw,\quad N=-(n+2)w(1+w^2)+v(-2n-1+3w^2)+3nv^2w+n^2v^3.$$ With this notation, $L$ takes the form $$L=\frac{2w^2N}{v^2M}-\frac{M^2}{lv^2w}.$$ Since $-1/(2n)<v<0$, we have $$M>1+w^2-\frac{w}{2}=\left(w-\frac{1}{4}\right)^2+\frac{15}{16}\geq\frac{15}{16}.$$ To ensure that $L\leq 0$, it is sufficient to verify that
\begin{equation}\label{Inequality-Goal'}
2lw^3N\leq M^3.
\end{equation}
The case $N\leq 0$ is immediate, so we only need to treat the case $N>0$. Notice that
\begin{align*}
N
\leq&-(n+2)w-(n+2)w^3-(2n+1)v+3nv^2w\\
\leq&~\frac{5}{4}-\left(n+2-\frac{3}{4n}\right)w-(n+2)w^3,
\end{align*}
which follows from $-1/(2n)<v<0$. Since $N>0$, we obtain $$\frac{5}{4}-\left(n+2-\frac{3}{4n}\right)w>0,$$ which is equivalent to $$w<\frac{5}{4c_n},\quad c_n=n+2-\frac{3}{4n}>n.$$ Meanwhile, we also know from the preceding estimate that $N\leq\frac{5}{4}$. Consequently, the conditions $l\leq n-1$ and $n\geq 2$ imply that
\begin{align*}
2lw^3N
\leq&~2(n-1)\left(\frac{5}{4c_n}\right)^3\frac{5}{4}\leq2\left(\frac{5}{4}\right)^4\frac{n-1}{n^3}\\
\leq&~2\left(\frac{5}{4}\right)^4\frac{1}{2^3}=\frac{625}{1024}<\left(\frac{15}{16}\right)^3\leq M^3.
\end{align*}
Therefore, \eqref{Inequality-Goal'} is proved.

Since $\phi\geq 0$ and $\phi(x_0)=0$, the strong maximum principle applied to \eqref{Inequality-Goal} yields $\phi\equiv 0\quad\mathrm{in}\ \mathcal{O}$, and the standard connectedness argument then shows that $\Lambda$ has rank $l$ throughout $\Omega$.

\end{proof}

\section{Proof of the main theorem}\label{Section 6}

We now complete the proof of the main theorem using the continuity method, combining the result for geodesic balls with the boundary convexity estimate and the constant rank theorem established in the previous sections. The key step is to deform the given domain to a geodesic ball while preserving strict horo-convexity. Throughout the deformation, all evolving domains remain inside a fixed ball of sufficiently small radius.

Let $M_0=\partial\Omega$, where $\Omega$ is a smooth strictly horo-convex domain containing a point $o$. The strict horo-convexity of $\Omega$ ensures that $M_0$ is an admissible initial hypersurface for the locally constrained curvature flow of Hu, Li, and Wei \cite{Hu-LW2022}, which we apply following Wei and Xiao \cite{Wei-Xiao2025}. The flow gives rise to a smooth family of strictly horo-convex hypersurfaces $\{M_t\}_{t\geq 0}$ with $M_t=\partial\Omega_t$ and $\Omega_0=\Omega$. Moreover, $M_t$ converges smoothly and exponentially to a geodesic sphere centered at $o$, so that $\Omega_t$ converges to the enclosed geodesic ball. The flow also preserves the containment property $\Omega_t\subset B_R(o)$ whenever $\Omega\subset B_R(o)$. Since the result of Hu, Li, and Wei \cite{Hu-LW2022} is established for $n\geq 3$, the case $n=2$ follows from the dimension reduction argument of Wei and Xiao. We refer to \cite[Sections 3 and 4]{Wei-Xiao2025} for the precise formulation of the flow and the properties used here.

\begin{proof}[Proof of Theorem \ref{Theorem-main theorem}]
Fix $o\in\Omega$, and observe that, with the choice $r_0(n)=1/(2\sqrt{n})$ made in Section \ref{Section 3}, the assumption $\mathrm{diam}(\Omega)\leq r_0(n)$ implies $\Omega\subset B_{r_0(n)}(o)$. Let $\{\Omega_t\}_{t\geq 0}$ be the deformation described above, denote by $u_t$ the torsion function of $\Omega_t$, and set $$v_t=-\sqrt{u_t},\quad \Lambda_t=\nabla^2v_t-|\nabla v_t|g.$$ Since the deformation preserves the containment $\Omega_t\subset B_{r_0(n)}(o)$, Proposition \ref{Proposition-uniform estimate} yields
\begin{equation}\label{Uniform estimate'}
-\frac{1}{2n}<v_t<0\quad\mathrm{in}\ \Omega_t
\end{equation}
for all $t\geq 0$.

Let $B_\infty$ be the limiting geodesic ball and write $v_{B_\infty}$ for its transformed torsion function. Since its radius is at most $r_0(n)$, Proposition \ref{Proposition-ball case} gives $\Lambda_\infty>0$ in $B_\infty$. The smooth convergence of $\Omega_t$ to $B_\infty$, together with standard elliptic estimates, implies that $v_t$ converges locally smoothly to $v_{B_\infty}$. The strict positivity above therefore implies that $\Lambda_t>0$ on compact subsets of $B_\infty$ for all sufficiently large $t$. Meanwhile, since $\partial\Omega_t$ converges smoothly to $\partial B_\infty$, the boundary estimate in Proposition \ref{Proposition-boundary estimate} holds uniformly for all sufficiently large $t$. It follows that there exists $T>0$ such that $\Lambda_t>0$ for every $t\geq T$.

We next apply a continuity argument to show that $\Lambda_t$ is positive definite throughout the deformation. If $\Lambda_t$ failed to be positive definite at some positive time, let $t^\ast>0$ be the first such time as $t$ decreases from $T$. Then $\Lambda_{t^\ast}\geq 0$ by continuity and $\Lambda_{t^\ast}$ is degenerate at an interior point by the boundary convexity estimate (Proposition \ref{Proposition-boundary estimate}). In view of \eqref{Uniform estimate'}, the constant rank theorem (Proposition \ref{Proposition-constant rank theorem}) shows that $\Lambda_{t^\ast}$ has constant rank in $\Omega_{t^\ast}$, contradicting the fact that it is degenerate at an interior point but positive definite near the boundary. Hence $$\Lambda_t>0\quad\mathrm{in}\ \Omega_t$$ for every $t>0$. Letting $t\rightarrow 0^+$ yields $\Lambda_0\geq 0$ in $\Omega$, and using Proposition \ref{Proposition-boundary estimate} and Proposition \ref{Proposition-constant rank theorem} once again, we conclude that $$\nabla^2v-|\nabla v|g>0\quad\mathrm{in}\ \Omega.$$ In particular, $\nabla^2v>0$ in $\Omega$, so $v$ is strictly convex, which completes the proof.
\end{proof}

\section{Optimality of the exponent}\label{Section 7}

We conclude by showing that the exponent $1/2$ in Theorem \ref{Theorem-main theorem} is optimal, even among strictly horo-convex domains of arbitrarily small diameter. To be precise, given any $\alpha>1/2$ and $\varepsilon>0$, we construct a bounded smooth strictly horo-convex domain $\Omega\subset\mathbb{H}^n$ with $\mathrm{diam}(\Omega)<\varepsilon$ whose torsion function $u$ has the property that $u^\alpha$ is not concave.

By the sharpness result of Kennington \cite{Kennington1985} and a smooth approximation, there exists a bounded smooth domain $D\subset\mathbb{R}^n$ whose boundary has positive principal curvatures and whose torsion function $U$ satisfies $$\begin{cases}
\Delta U=-2 & \mathrm{in}\ D,\\
\hspace{0.3cm} U=0 & \mathrm{on}\ \partial D,
\end{cases}$$ and
\begin{equation}\label{Euclidean positive direction}
\nabla^2(U^\alpha)(e,e)>0\quad\mathrm{at}\ x_0
\end{equation}
for some $x_0\in D$ and $e\in\mathbb{R}^n$ with $|e|=1$.

We now transfer this example to hyperbolic space. Fix $o\in\mathbb{H}^n$, identify $T_o\mathbb{H}^n$ with $\mathbb{R}^n$ by a linear isometry, and use the exponential map $\exp_o: T_o\mathbb{H}^n\rightarrow\mathbb{H}^n$ to define $$F_\delta(x)=\exp_o(\delta x),\quad \Omega_\delta=F_\delta(D),\quad\mathrm{for}\ \delta>0.$$ On $D$, we introduce the rescaled pullback metric $g_\delta=\delta^{-2}F_\delta^\ast g$. If $g_{ij}(y)$ denote the coefficients of the hyperbolic metric in normal coordinates centered at $o$, then $$(F_\delta^\ast g)_{ij}(x)=\delta^2g_{ij}(\delta x)$$ and $$(g_\delta)_{ij}(x)=g_{ij}(\delta x).$$ Since the coefficients $g_{ij}$ are smooth and $g_{ij}(0)=\delta_{ij}$, the chain rule yields $$g_\delta\rightarrow g_{\mathbb{R}^n}\quad\mathrm{in}\ C^\infty(\overline{D})$$ as $\delta\rightarrow 0$.

Let $u_\delta$ be the torsion function of $\Omega_\delta$, and define its rescaled pullback to $D$ by $$U_\delta=\delta^{-2}u_\delta\circ F_\delta.$$ Since $F_\delta$ is an isometry from $(D,F_\delta^\ast g)$ onto $(\Omega_\delta,g)$ and $$\Delta_{F_\delta^\ast g}=\Delta_{\delta^2g_\delta}=\delta^{-2}\Delta_{g_\delta},$$ we obtain $$\Delta_{g_\delta}U_\delta=\Delta_{F_\delta^\ast g}(u_\delta\circ F_\delta)=(\Delta_g u_\delta)\circ F_\delta=-2\quad\mathrm{in}\ D,$$ with $U_\delta=0$ on $\partial D$. Consequently, the smooth convergence of the metrics and standard elliptic estimates yield $$U_\delta\rightarrow U\quad\mathrm{in}\ C_{\mathrm{loc}}^2(D),$$ and hence, by \eqref{Euclidean positive direction}, $$\nabla^2_{g_\delta}(U_\delta^\alpha)(e,e)>0\quad \mathrm{at}\ x_0$$ for all sufficiently small $\delta$. Moreover, since $u_\delta\circ F_\delta=\delta^2U_\delta$ and a constant rescaling of a metric does not change its Levi-Civita connection, we have $$F_\delta^\ast(\nabla_g^2u_\delta^\alpha)=\nabla_{F_\delta^\ast g}^2(u_\delta^\alpha\circ F_\delta)=\delta^{2\alpha}\nabla_{g_\delta}^2(U_\delta^\alpha).$$ Thus $\nabla_g^2u_\delta^\alpha$ has a positive direction at $F_\delta(x_0)$, and hence $u_\delta^\alpha$ is not concave in $\Omega_\delta$.

It remains to verify that $\Omega_\delta$ is strictly horo-convex and has arbitrarily small diameter. Since $\partial D$ has positive Euclidean principal curvatures and $g_\delta$ converges smoothly to the Euclidean metric on $\overline{D}$, the principal curvatures of $\partial D$ with respect to $g_\delta$ are bounded below by a constant $c>0$ for all sufficiently small $\delta$. Under the rescaling $F_\delta^\ast g=\delta^2g_\delta$, the principal curvatures are multiplied by $\delta^{-1}$, and the isometry property of $F_\delta$ therefore shows that the principal curvatures of $\partial\Omega_\delta$ are bounded below by $c/\delta>1$ when $\delta$ is sufficiently small. Hence $\Omega_\delta$ is strictly horo-convex. Finally, the identity $\mathrm{d}_g(o,F_\delta(x))=\delta|x|$, where $|\cdot|$ denotes the Euclidean norm, and the triangle inequality imply $$\mathrm{diam}(\Omega_\delta)\leq 2\delta\sup_{x\in D}|x|\rightarrow 0.$$ Taking $\delta$ sufficiently small, we obtain $\mathrm{diam}(\Omega_\delta)<\varepsilon$, while $u_\delta^\alpha$ is not concave. This completes the proof of the optimality of the exponent.

\bibliographystyle{plain}

\bibliography{mybibliography}

\end{document}